\font\smallit=cmti10
\newtheorem{theorem}{\textbf{Theorem}}
\newtheorem{prop}{\textbf{Proposition}}
\theoremstyle{definition}
\renewcommand\section{\@startsection {section}{1}{\z@}
{-30pt \@plus -1ex \@minus -.2ex}
{2.3ex \@plus.2ex}
{\normalfont\normalsize\bfseries}}
\renewcommand\subsection{\@startsection{subsection}{2}{\z@}
{-3.25ex\@plus -1ex \@minus -.2ex}
{1.5ex \@plus .2ex}
{\normalfont\normalsize\bfseries}}
\renewcommand{\@seccntformat}[1]{\csname the#1\endcsname. }
\begin{document}

\begin{center}
\uppercase{\bf Evaluation of Tachiya's algebraic infinite products involving Fibonacci and Lucas numbers}
\vskip 10pt
{\bf Jonathan Sondow
}\\
{\smallit 209 West 97th Street \#6F,
New York, NY 10025, USA\\
jsondow@alumni.princeton.edu}\\
\end{center}
\bigskip

\noindent{\bf Abstract.} In 2007, Tachiya gave necessary and sufficient conditions for the transcendence of certain infinite products involving Fibonacci numbers $F_k$ and Lucas numbers $L_k$. In the present note, we explicitly evaluate two classes of his algebraic examples. Special cases are
$$\prod_{n=1}^{\infty}\left(1+\frac{1}{F_{2^n+1}}\right)=\frac{3}{\varphi}, \qquad
\prod_{n=1}^{\infty}\left(1+\frac{1}{L_{2^n+1}}\right)=3-\varphi\, ,$$
where $\varphi=(1+\sqrt{5})/2$ is the golden ratio.\\

\noindent{\bf Mathematics Subject Classification 2000:} 11B39, 11Y60

\noindent{\bf Keywords:} Fibonacci number, Lucas number, infinite product, algebraic number

\pagestyle{myheadings} 
\thispagestyle{empty} 
\baselineskip=12.875pt 
\setcounter{page}{1}
\bigskip
\bigskip

\centerline{\bf INTRODUCTION}
\bigskip

For $k\ge0$, define the \emph{Fibonacci numbers} $F_k$ by $F_{k+2}=F_{k+1}+F_k$ with $F_0=0$ and $F_1=1$, and the \emph{Lucas numbers} $L_k$ by $L_{k+2}=L_{k+1}+L_k$ with $L_0=2$ and $L_1=1$.

In 2003, D.~Duverney and K.~Nishioka \cite{dn} used Mahler's method to give a transcendence criterion for general series. As applications, they established necessary and sufficient conditions for the transcendence of the numbers
$$\sum_{n=0}^{\infty} \frac{c_n}{F_{r^n}+d_n}, \qquad \sum_{n=0}^{\infty} \frac{c_n}{L_{r^n}+d_n},$$
where $r\ge2$ is an integer and $\{c_n\}_{n\ge0}$ and $\{d_n\}_{n\ge0}$ are suitable sequences of algebraic numbers.

By modifying their method, Y.~Tachiya \cite{yohei} in 2007 proved analogous results for infinite products. In particular, he gave necessary and sufficient conditions for the transcendence of the numbers
$$\prod_{n=0}^{\infty}\left(1+\frac{c_n}{F_{ar^n+b}}\right), \qquad \prod_{n=0}^{\infty}\left(1+\frac{c_n}{L_{ar^n+b}}\right),$$
where $r\ge2$, $a$, and $b$ are integers, $ar^n+b\neq0$ for $n\ge0$, and $\{c_n\}_{n\ge0}$ is a suitable sequence of algebraic numbers. In the special case where the sequence is constant, his Examples~1 and~2 can be stated as the following two theorems.

\begin{theorem}[{\bf Tachiya}]\label{yoheiF}
Let $a\ge1$, $b\ge0$, $r\ge2$, and $c$ be integers. Then
$$\prod_{n=0}^{\infty}\left(1+\frac{c}{F_{ar^n+b}}\right)$$
is algebraic if and only if

\noindent $\ (i)$ $c=0$, or

\noindent $(ii)$ $r=2$ and $c=F_b$.
\end{theorem}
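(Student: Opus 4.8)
The plan is to prove the two implications by entirely different means: the ``if'' direction is an explicit elementary evaluation, while the ``only if'' direction is a transcendence statement for which I would invoke Mahler's method in the product form of \cite{yohei}.

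For the ``if'' direction, case $(i)$ is trivial, since $c=0$ makes every factor equal to $1$. In case $(ii)$, with $r=2$ and $c=F_b$, I would show the product telescopes to an algebraic number. Writing each factor (with product index $n$) as
$$1+\frac{F_b}{F_{2^n a+b}}=\frac{F_{2^n a+b}+F_b}{F_{2^n a+b}}$$
and applying the addition formula $F_{p+q}+(-1)^qF_{p-q}=F_pL_q$ with $p=2^{n-1}a+b$ and $q=2^{n-1}a$, the sign is $+1$ for $n\ge2$ (as $2^{n-1}a$ is then even) and one gets $F_{2^n a+b}+F_b=F_{2^{n-1}a+b}\,L_{2^{n-1}a}$. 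Combining this with the duplication identity $L_{2^{n-1}a}=F_{2^n a}/F_{2^{n-1}a}$ rewrites the factor as
$$\frac{F_{2^{n-1}a+b}}{F_{2^n a+b}}\cdot\frac{F_{2^n a}}{F_{2^{n-1}a}},$$
so that both quotients telescope. Letting the upper index tend to infinity and using $F_{m+b}/F_m\to\varphi^b$, the tail collapses to a ratio of Fibonacci numbers times $\varphi^{-b}$, an element of $\mathbb{Q}(\sqrt5)$; restoring the finitely many initial factors keeps the value algebraic.

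For the ``only if'' direction I would use Binet's formula $F_k=(\varphi^k-\beta^k)/\sqrt5$ with $\beta=(1-\sqrt5)/2$ and $\varphi\beta=-1$. Since $\beta^k=(-1)^k\varphi^{-k}$, each factor equals
$$1+\frac{c}{F_{ar^n+b}}=\frac{1+c\sqrt5\,y_n-\epsilon_n y_n^{2}}{1-\epsilon_n y_n^{2}},\qquad y_n=\varphi^{-(ar^n+b)},\ \ \epsilon_n=(-1)^{ar^n+b}.$$
The sign $\epsilon_n$ is eventually constant in $n$, so it disturbs only finitely many factors; writing $y_n=\varphi^{-b}S^{r^n}$ with the algebraic base point $S=\varphi^{-a}$, $0<|S|<1$, the remaining factors are the values $\rho(S^{r^n})$ of one fixed rational function $\rho$. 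Thus, up to finitely many algebraic corrections, the product equals $f(S)$ for the Mahler function $f(z)=\prod_{n\ge0}\rho(z^{r^n})$, which satisfies $f(z)=\rho(z)f(z^r)$. The transcendence criterion behind \cite{dn}, \cite{yohei} then forces $f(S)$ to be transcendental unless $f$ is algebraic over $\mathbb{C}(z)$. That degeneracy happens exactly in the two exceptional cases: when $c=0$ the numerator of $\rho$ coincides with its denominator, so $\rho\equiv1$; and when $r=2$, $c=F_b$ the numerator factors as $(1+S^{r^n})(1-(-1)^b\varphi^{-2b}S^{r^n})$, producing the very cancellation that underlies the telescoping above and makes $f$ algebraic.

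The main obstacle is precisely this last point in the ``only if'' direction: showing that for all remaining parameters the Mahler function $f$ is \emph{not} algebraic over $\mathbb{C}(z)$, and checking that the hypotheses of the transcendence criterion (algebraicity and size of $S$, non-vanishing of the relevant function values) hold, so that no accidental algebraic relation can intervene. Isolating the two degenerate families is the technical heart of Tachiya's argument; by contrast, the telescoping evaluation of case $(ii)$ is routine, and it is this explicit value that the present note records.
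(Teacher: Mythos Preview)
The paper does not prove this theorem: it is stated as Tachiya's result with the proof residing in \cite{yohei}. What the present note actually proves is Proposition~\ref{PROP: F}, which amounts to the explicit evaluation in case~$(ii)$ and hence establishes the ``if'' direction of that case. So for the ``only if'' direction there is nothing in the paper to compare your sketch against; your outline of the Mahler-method argument is in the right spirit, but as you yourself note, the real work---showing that outside cases $(i)$ and $(ii)$ the associated Mahler function is transcendental over $\mathbb{C}(z)$---is deferred to Tachiya, and your proposal does not carry this out.

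For the ``if'' direction your argument is correct but takes a different route from the paper. You use the Fibonacci addition formula $F_{p+q}+(-1)^qF_{p-q}=F_pL_q$ together with $L_m=F_{2m}/F_m$ to rewrite each factor (for $n\ge2$) as a ratio that telescopes, and then appeal to $F_{m+b}/F_m\to\varphi^b$ to evaluate the tail. The paper instead substitutes Binet's formula $F_k=(\varphi^k-(-\varphi^{-1})^k)/(\varphi+\varphi^{-1})$ directly into each factor, obtains
\[
1+\frac{F_b}{F_{2^na+b}}=\left(1+\varphi^{-2^na}\right)\frac{1-(-1)^{b}\varphi^{-2^{n}a-2b}}{1-(-1)^{b}\varphi^{-2^{n+1}a-2b}},
\]
and then telescopes the second quotient while summing the first via $\prod_{n\ge1}(1+x^{2^n})=1/(1-x^2)$. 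Your identity-based approach is more combinatorial and avoids introducing $\varphi$ until the very last limit; the paper's Binet approach is more direct and yields the closed form $(1-(-1)^b\varphi^{-2a-2b})/(1-\varphi^{-2a})$ in one stroke. Both are elementary and both confirm algebraicity in $\mathbb{Q}(\sqrt5)$.
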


In particular,
$$\prod_{n=0}^{\infty}\left(1+\frac{c}{F_{r^n}}\right)$$
is algebraic if and only if $c=0$.

\begin{theorem}[{\bf Tachiya}]\label{yoheiL}
Let $a\ge1$, $b\ge0,$ $r\ge2$, and $c$ be integers. Then
$$\prod_{n=0}^{\infty}\left(1+\frac{c}{L_{ar^n+b}}\right)$$
is algebraic if and only if

\noindent$\ \ (i)$ $c=0$, or

\noindent $\ (ii)$ $r=2$ and $c=L_b$, or

\noindent$(iii)$ $r=2,b=0$, and there exists a root of unity $\omega$ such that $\omega^{2^n}+\omega^{-2^n}=c$ for all large~$n$.
\end{theorem}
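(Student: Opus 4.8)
The plan is to split the statement into its two directions and to route both through the Binet substitution and a Mahler-type functional equation. Writing $\psi=(1-\sqrt5)/2$, so that $\varphi\psi=-1$ and $L_m=\varphi^m+\psi^m=\varphi^m+(-1)^m\varphi^{-m}$, each factor of the product becomes
$$1+\frac{c}{L_{ar^n+b}}=1+\frac{c\varphi^{-b}X_n}{1+\varepsilon\varphi^{-2b}X_n^{2}},\qquad X_n=\varphi^{-ar^n},\qquad \varepsilon=(-1)^{b},$$
the constant $\varepsilon=(-1)^b$ being correct once $ar^n$ is even, i.e. for all $n\ge1$. Since $X_{n+1}=X_n^{r}$, the tail of the product is the value $f(q)$, where $q=\varphi^{-a}\in(0,1)$ is algebraic and not a root of unity, and
$$f(X)=\prod_{k\ge0}R(X^{r^{k}}),\qquad R(X)=\frac{1+c\varphi^{-b}X+\varepsilon\varphi^{-2b}X^{2}}{1+\varepsilon\varphi^{-2b}X^{2}},\qquad f(X)=R(X)\,f(X^{r}).$$
The finitely many initial factors are algebraic and nonzero, so they do not affect algebraicity; thus the whole question reduces to whether $f(q)$ is algebraic.

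For the ``if'' direction I would evaluate $f(q)$ explicitly whenever $(i)$--$(iii)$ hold. Case $(i)$ is trivial, and in $(ii)$ and $(iii)$ the point is that $R$ telescopes, via the elementary identity $\prod_{n\ge0}(1+\rho^{2^n})=1/(1-\rho)$ valid for $|\rho|<1$. Concretely, in case $(iii)$ ($r=2$, $b=0$) the relation $\omega^{2^n}+\omega^{-2^n}=c$ lets me factor
$$1+\frac{c}{L_{a2^n}}=\frac{(1+\tilde\alpha^{2^n})(1+\tilde\beta^{2^n})}{1+\tilde\gamma^{2^n}},\qquad \tilde\alpha=\varphi^{-a}\omega^{-1},\quad \tilde\beta=\varphi^{-a}\omega,\quad \tilde\gamma=\varphi^{-2a}=\tilde\alpha\tilde\beta,$$
because the numerator expands to $1+\varphi^{-a2^n}(\omega^{2^n}+\omega^{-2^n})+\varphi^{-2a2^n}=1+c\varphi^{-a2^n}+\varphi^{-2a2^n}$. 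Since $|\tilde\alpha|,|\tilde\beta|,|\tilde\gamma|<1$, the three telescoping products combine to the algebraic number $(1-\tilde\gamma)/[(1-\tilde\alpha)(1-\tilde\beta)]$, and the hypothesis that $\omega$ is a \emph{root of unity} is exactly what keeps $\tilde\alpha,\tilde\beta$ algebraic. Case $(ii)$ is handled the same way, using the Lucas addition formula $L_{a2^n+b}+L_b=L_{a2^{n-1}+b}L_{a2^{n-1}}$ to produce a (shifted) telescoping; equivalently, $c=L_b$ is precisely the condition under which the two roots of the numerator of $R$ match the poles of $R$ one doubling-level up. This case is the direct analogue of case $(ii)$ of Theorem~\ref{yoheiF}, whereas $(iii)$ is a genuinely new phenomenon, absent from the Fibonacci statement.

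For the ``only if'' direction I would invoke Mahler's method: for algebraic $q$ with $0<|q|<1$ that is not a pole, the value $f(q)$ of a solution of $f(X)=R(X)f(X^{r})$ is transcendental \emph{unless} $f$ is itself a rational function. This is the instance of the Duverney--Nishioka \cite{dn} and Tachiya \cite{yohei} transcendence criterion that underlies the whole theorem, and I would either cite it or reconstruct the one needed case directly from Mahler's theorem on functional equations. It then remains to decide when $f=\prod_{k\ge0}R(X^{r^{k}})$ is rational, equivalently when $R(X)=h(X)/h(X^{r})$ for some rational $h$. Analysing the zeros of the numerator and the poles of $R$ under $X\mapsto X^{r}$, one finds that for $r\ge3$ the two poles of $R$ can never assemble into complete orbits of $r$-th roots, which forces $h$ constant and hence $c=0$, giving $(i)$; for $r=2$ the doubling map admits more solutions, and the pole--zero matching collapses to exactly $c=L_b$ and, when $b=0$, a cyclotomic coincidence, i.e. $(ii)$ and $(iii)$.

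The main obstacle is this last classification, and in particular the emergence of $(iii)$. Everything upstream is either transcendence input, which may be quoted, or telescoping algebra, which is routine once the right factorization is found. The delicate step is to prove that, for $r=2$ and $b=0$, the only way $R(X)=h(X)/h(X^{2})$ can hold beyond the solution $c=L_0=2$ is through a root of unity $\omega$ with $\omega^{2^n}+\omega^{-2^n}=c$ for all large $n$: the doubling structure forces the numerator roots onto the unit circle with integer trace, and the classification of algebraic numbers on the unit circle having integer trace is precisely what produces the root-of-unity description. Pinning down this equivalence --- rather than the transcendence machinery itself --- is where the real work lies.
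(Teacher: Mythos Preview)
There is nothing in this paper to compare your proposal against: Theorem~\ref{yoheiL} is not proved here. It is quoted as Tachiya's result, with the proof residing entirely in~\cite{yohei}; the present note's contribution is only Proposition~\ref{PROP: F}, which explicitly evaluates the algebraic products arising in case~$(ii)$. Your outline is a plausible reconstruction of the Mahler-method strategy that underlies Tachiya's work, and the telescoping you describe for case~$(ii)$ is indeed close in spirit to the paper's proof of Proposition~\ref{PROP: F}. But the ``only if'' classification --- the heart of the theorem --- cannot be checked against this paper at all; you would need to consult~\cite{yohei} directly.

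One technical slip is worth flagging regardless. You assert that $\varepsilon=(-1)^b$ is the correct sign ``once $ar^n$ is even, i.e.\ for all $n\ge1$'', but $ar^n$ need not be even for $n\ge1$: if both $a$ and $r$ are odd, then $ar^n$ is odd for every $n$. The repair is easy --- for odd $r$ the sign $(-1)^{ar^n+b}$ is constant in $n$ anyway, and for even $r$ your claim holds from $n=1$ on --- but as written your reduction to a single rational $R(X)$ with $\varepsilon=(-1)^b$ is not correct in the odd-$a$, odd-$r$ case.
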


In particular, Tachiya \cite[p. 185]{yohei} obtained that, for any integers $c\neq0$ and $r\ge2$, the number
$$\prod_{\substack{n=1\\
 L_{r^n}\neq-c}}^{\infty}\left(1+\frac{c}{L_{r^n}}\right)$$
is transcendental, except for the two algebraic cases
$$\prod_{n=1}^{\infty}\left(1+\frac{-1}{L_{2^n}}\right)=\frac{\sqrt5}{4}, \qquad\prod_{n=1}^{\infty}\left(1+\frac{2}{L_{2^n}}\right)=\sqrt5,$$
which follow from the case $(iii)$ with $\omega=(1\pm\sqrt{-3})/2$ and $\omega=\pm1$, respectively. He says, ``These examples of algebraic infinite products involving Lucas numbers seem to be new.''

In the present note, we explicitly evaluate the algebraic infinite products in case~$(ii)$ of Theorems~\ref{yoheiF} and~\ref{yoheiL}. These formulas may also be new.

\newpage

\centerline{\bf THE FORMULAS}
\bigskip

\begin{prop} \label{PROP: F}
Let $F_k$ and $L_k$ be the $k$th Fibonacci and Lucas numbers, and let $a$ and~$b$ be positive integers. Then
\begin{equation} \label{EQ: Fab}
\prod_{n=1}^{\infty}\left(1+\frac{F_b}{F_{2^na+b}}\right)=\frac{1-(-1)^{b}\varphi^{-2a-2b}}{1-\varphi^{-2a}}
\end{equation}
and
\begin{equation} \label{EQ: Lab}
\prod_{n=1}^{\infty}\left(1+\frac{L_b}{L_{2^na+b}}\right)=\ \frac{1+(-1)^{b}\varphi^{-2a-2b}}{1-\varphi^{-2a}},
\end{equation}
where$$\varphi=\frac{1+\sqrt{5}}{2}$$ is the golden ratio. In particular,
\begin{equation} \label{EQ: F1}
\prod_{n=1}^{\infty}\left(1+\frac{1}{F_{2^n+1}}\right)=\frac{3}{\varphi}
\end{equation}
and
\begin{equation} \label{EQ: L1}
\prod_{n=1}^{\infty}\left(1+\frac{1}{L_{2^n+1}}\right)=3-\varphi.
\end{equation}
\end{prop}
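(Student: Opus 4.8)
The plan is to evaluate each product by writing down the partial products explicitly, telescoping them, and then letting the number of factors tend to infinity. Throughout I will use Binet's formulas $F_k=(\varphi^k-\psi^k)/\sqrt5$ and $L_k=\varphi^k+\psi^k$, where $\psi=(1-\sqrt5)/2=-\varphi^{-1}$ satisfies $\varphi\psi=-1$ and $\psi^{2}=\varphi^{-2}$; in particular $\psi^{m}\to0$ as $m\to\infty$, which both guarantees convergence of the products (the factors approach $1$ geometrically) and controls the limit. The engine of the argument is the pair of classical sum‑to‑product identities
\[
F_{m+s}+(-1)^{s}F_{m-s}=F_mL_s,\qquad L_{m+s}+(-1)^{s}L_{m-s}=L_mL_s,
\]
together with the duplication formula $F_{2m}=F_mL_m$, all immediate from Binet's formulas.

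First I would record the telescoping Lucas product: iterating $L_m=F_{2m}/F_m$ gives $\prod_{k=1}^{N-1}L_{2^{k}a}=F_{2^{N}a}/F_{2a}$. Next, for the factors with $n\ge2$ I apply the sum identities with $m=2^{n-1}a+b$ and $s=2^{n-1}a$. Since $n\ge2$ forces $s$ to be even, the sign $(-1)^{s}$ is $+1$, yielding the clean factorizations $F_{2^{n}a+b}+F_b=F_{2^{n-1}a+b}L_{2^{n-1}a}$ and $L_{2^{n}a+b}+L_b=L_{2^{n-1}a+b}L_{2^{n-1}a}$. Writing $1+F_b/F_{2^{n}a+b}=(F_{2^{n}a+b}+F_b)/F_{2^{n}a+b}$ and leaving the $n=1$ factor untouched, the product over $n\ge2$ telescopes (both the shifted‑Fibonacci chain and the Lucas product), leaving
\[
\prod_{n=1}^{N}\Bigl(1+\frac{F_b}{F_{2^{n}a+b}}\Bigr)=\frac{F_{2a+b}+F_b}{F_{2a}}\cdot\frac{F_{2^{N}a}}{F_{2^{N}a+b}},
\]
and the analogous formula for the Lucas product, with constant $(L_{2a+b}+L_b)/F_{2a}$ and tail $F_{2^{N}a}/L_{2^{N}a+b}$.

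Finally I let $N\to\infty$. Because $\psi^{2^{N}a}\to0$, Binet's formulas give $F_{2^{N}a}/F_{2^{N}a+b}\to\varphi^{-b}$ and $F_{2^{N}a}/L_{2^{N}a+b}\to\varphi^{-b}/\sqrt5$, so the two products equal $\varphi^{-b}(F_{2a+b}+F_b)/F_{2a}$ and $\varphi^{-b}(L_{2a+b}+L_b)/(\sqrt5\,F_{2a})$, respectively. Substituting Binet's formulas once more and using $\psi^{b}\varphi^{-b}=(-1)^{b}\varphi^{-2b}$ and $\psi^{2a}=\varphi^{-2a}$, each expression collapses to the right‑hand sides of~\eqref{EQ: Fab} and~\eqref{EQ: Lab}. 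The special cases~\eqref{EQ: F1} and~\eqref{EQ: L1} then follow by setting $a=b=1$ and simplifying with $\varphi^{-2}=2-\varphi$ and $\varphi^{-4}=5-3\varphi$.

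I expect the main obstacle to be the bookkeeping in the telescoping rather than any single hard step: one must notice that the troublesome sign $(-1)^{s}$ in the sum‑to‑product identities is harmless for $n\ge2$ (where $s=2^{n-1}a$ is automatically even) but would depend on the parity of $a$ at $n=1$, so the cleanest route is to isolate the first factor and telescope only from $n=2$ onward. The remaining work—verifying that $\varphi^{-b}(F_{2a+b}+F_b)/F_{2a}$ and $\varphi^{-b}(L_{2a+b}+L_b)/(\sqrt5\,F_{2a})$ equal the stated ratios—is a short Binet computation in which, writing $u=\varphi^{-2a}$ and $v=(-1)^{b}\varphi^{-2b}$, each numerator factors so as to share the term $1+u$ with the denominator $1-u^{2}=(1-u)(1+u)$; cancelling $1+u$ leaves $(1\mp vu)/(1-u)$, which is exactly $\bigl(1\mp(-1)^{b}\varphi^{-2a-2b}\bigr)/\bigl(1-\varphi^{-2a}\bigr)$.
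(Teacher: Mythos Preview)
Your argument is correct, but it takes a genuinely different route from the paper's. The paper substitutes Binet's formula at the outset and works entirely with powers of $\varphi$: after a short algebraic rearrangement it obtains the factorization
\[
1+\frac{F_b}{F_{2^na+b}}=\bigl(1+\varphi^{-2^na}\bigr)\cdot\frac{1-(-1)^b\varphi^{-2^na-2b}}{1-(-1)^b\varphi^{-2^{n+1}a-2b}},
\]
valid uniformly for every $n\ge1$, so that the second factor telescopes directly while the first is summed via $\prod_{n\ge1}(1+x^{2^n})=1/(1-x^2)$. You instead stay in the Fibonacci--Lucas world as long as possible, using the sum-to-product identities together with $F_{2m}=F_mL_m$ to telescope the partial products themselves, and you invoke Binet only at the end to evaluate the limit $F_{2^Na}/F_{2^Na+b}\to\varphi^{-b}$ and to simplify the resulting constant. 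The paper's approach is uniform in $n$ and slightly slicker once the key factorization is spotted; yours has the pleasant by-product of an explicit closed form for the finite partial products, at the mild cost of having to isolate the $n=1$ term so that $s=2^{n-1}a$ is guaranteed even in the sum-to-product identity.
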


\begin{proof}
(Compare \cite[p. 199]{yohei}.) From Binet's formula
$$F_k=\frac{\varphi^k-(-\varphi^{-1})^{k}}{\varphi+\varphi^{-1}}$$
we have
\begin{align*}
\prod_{n=1}^{\infty}\left(1+\frac{F_b}{F_{2^na+b}}\right)
=&\prod_{n=1}^{\infty}\left(1+\frac{\varphi^{b}-(-1)^{b}\varphi^{-b}}{\varphi^{2^na+b}-(-1)^{b}\varphi^{-2^na-b}}\right)\\
=&\prod_{n=1}^{\infty}\left(1+\frac{\varphi^{-2^na-b}\left(\varphi^b-(-1)^{b}\varphi^{-b}\right)}{1-(-1)^{b}\varphi^{-2^{n+1}a-2b}}\right)\\
=&\prod_{n=1}^{\infty} \left(1+\varphi^{-2^na}\right)\frac{1-(-1)^{b}\varphi^{-2^{n}a-2b}}{1-(-1)^{b}\varphi^{-2^{n+1}a-2b}}\\
=&\ \frac{1-(-1)^{b}\varphi^{-2a-2b}}{1-\varphi^{-2a}}\, ,
\end{align*}
by cancellation and the fact that
$$\prod_{n=1}^{\infty} \left(1+x^{2^n}\right)=\frac{1}{1-x^2} \qquad(\vert x\vert<1).$$
This proves the equality \eqref{EQ: Fab}. It and the relation $\varphi^2=\varphi+1$ yield
\begin{align*}
\prod_{n=1}^{\infty}\left(1+\frac{1}{F_{2^n+1}}\right)= \frac{1+\varphi^{-4}}{1-\varphi^{-2}}
=\frac{\varphi^{4}+1}{\varphi^{4}-\varphi^{2}}= \frac{3\varphi+3}{\varphi^2+\varphi}=\frac{3}{\varphi}\, ,
\end{align*}
proving \eqref{EQ: F1}. Using the formula $L_k=\varphi^k+(-\varphi)^{-k},$ the proofs of \eqref{EQ: Lab} and \eqref{EQ: L1} are similar and will be omitted.
\end{proof}

\end{document}